%% file: rigidityZ2eigensection.tex
\documentclass[11pt]{amsart}
\usepackage{amsmath, amssymb, amscd, mathrsfs, slashed, enumerate, url}
\usepackage{color}
\usepackage{extsizes}
\usepackage{xcolor}
\usepackage[all,cmtip]{xy}
\usepackage{multicol}
\usepackage{indentfirst}
\usepackage{latexsym}
\usepackage{bm}
\usepackage{graphicx}
\usepackage{subfigure,esint}
\usepackage{float}
\usepackage{verbatim}
\usepackage{comment}
\usepackage[top=1in, bottom=1in, left=1.25in, right=1.25in]{geometry}
\usepackage{epsfig,dsfont,amsthm,amsfonts,amsbsy}
\usepackage[colorlinks]{hyperref}
\usepackage[toc,page]{appendix}
\usepackage{tikz-cd}
\usepackage{amsthm,thmtools,xcolor}
\newtheorem{theorem}{Theorem}[section]

\newtheorem{corollary}[theorem]{Corollary}

\newtheorem{definition}[theorem]{Definition}

\newtheorem{proposition}[theorem]{Proposition}
\newtheorem*{remark}{Remark}

\input{convention.tex}

\usepackage{calligra}
\DeclareMathAlphabet{\mathcalligra}{T1}{calligra}{m}{n}

\declaretheoremstyle[
headfont=\color{blue}\normalfont\bfseries,
bodyfont=\color{blue}\normalfont\itshape,
]{colored}

\usepackage[utf8]{inputenc}
\usepackage[english]{babel}
\usepackage{fancyhdr}
\usepackage{accents}

\usepackage[textsize=small]{todonotes}
\newcommand{\Comment}[2][\empty]{\ifthenelse{\equal{#1}{\empty}}{\todo[color=gray!10]{#2}\ }{\todo[color=gray!10,#1]{#2}}}%inline comment
\title[\normalsize Deformation rigidity for $\ZT$ eigensections]{\Large Deformation rigidity for $\ZT$ eigensections}

\author[\normalsize A. Haydys, S. He, A. Salm]
{
	\begin{minipage}{0.3\textwidth}
		\centering
		\normalsize Andriy Haydys \\
		{ Universit\'e libre de Bruxelles}
	\end{minipage}
	\begin{minipage}{0.36\textwidth}
		\centering
		\normalsize Siqi He\\
		{Academy of Mathematics \\and Systems Science}
	\end{minipage}
	\begin{minipage}{0.3\textwidth}
		\centering
		\normalsize Willem Andriaan Salm\\
		{Universit\'e libre de Bruxelles }
	\end{minipage}
}

\usepackage{etoolbox}

\makeatletter
\let\MakeUppercase\relax % this disables uppercasing authors
\makeatother
	
\date{\today}

\begin{document}

\begin{abstract}
	We prove a rigidity result for certain critical $\mathbb Z/2$ eigensections of the Laplacian on $S^2$ associated to a flat real line bundle determined by a branch–point configuration.  
	More precisely, we show that every minimal non-degenerate critical eigensection is deformation rigid: any sufficiently small deformation of the configuration that still admits a critical eigensection must come from an $\mathrm{SO}(3)$-rotation.  
	This generalizes the rigidity phenomenon previously discovered in symmetric examples of Taubes--Wu.  
\end{abstract}

\begingroup
\def\uppercasenonmath#1{} % this disables uppercasing title
\let\MakeUppercase\relax % this disables uppercasing authors
\maketitle
\endgroup

\section{Introduction}

Recently, various types of $\mathbb Z/2$ harmonic sections attracted a noticeable attention due to their appearance in a number of seemingly unrelated geometric problems. For example, sequences of degenerating flat $\mathrm{SL}_2(\mathbb C)$ connections and diverging sequences of solutions of the generalized Seiberg--Witten equations
\cite{haydyswalpuski2015compactness,taubes2013compactness,taubes2013psl2c,walpuskizhang2019compactness} are related to $\mathbb Z/2$ harmonic forms and spinors. Certain coassociative fibrations of $\mathrm G_2$-manifolds in the adiabatic limit yield $\mathbb Z/2$ harmonic functions~\cite{Donaldson17_AdiabaticLimits}. Infinitesimal deformations of branched special Lagrangian submanifolds are modelled on $\mathbb Z/2$ harmonic forms \cite{HE2023SpecialLagrangian}.
All this provides a strong motivation for studying $\mathbb Z/2$ harmonic sections among which $\mathbb Z/2$ harmonic functions are perhaps the simplest objects to study. 
    
A better understanding of regularity of $\mathbb Z/2$ harmonic functions/forms/spinors is necessary to make a progress in the above mentioned problems such as compactifications of various gauge--theoretic moduli spaces of interest. 
Indeed, the technique currently available allows us to deal effectively with $\Z/2$ harmonic sections whose branching locus is smooth. 
While some progress has been achieved recently in dealing with $\Z/2$ harmonic sections with mildly singular branching loci~\cite{HMT2023IndexGraphs}, our understanding of regularity of branching sets at present is insufficient for intended applications.  
This motivates our studies of certain $\mathbb Z/2$ eigensections of the Laplacian on the 2-sphere since they are models for singularities of $\mathbb Z/2$ harmonic functions on 3-manifolds. 

To explain some details, let $\mbp$ be an unordered set of $2n$ distinct points on $S^2$. 
There is a unique flat real Euclidean line bundle $\cI_\mbp\to S^2\setminus\mbp$ with monodromy $-1$ along each small circle encircling any point in $\mbp$. 
We then obtain the Laplace operator acting on sections of $\cI_\mbp$ in the standard way. Then $u\in \Gamma(\cI_\mbp)$ is said to be an eigensection of the Laplacian (or a $\mathbb Z/2$ eigenfunction) if there exists $\La\in\R$ such that
\begin{equation*}
  \Delta u = \La u\qquad\text{holds on } S^2\setminus\mbp.
\end{equation*}
We are mainly interested in the so called \textit{critical} eigensections, which are solutions $u$ satisfying the following property: near each $p\in\mbp$ we have $|u (x)|\le C\dist (x,p)^{3/2}$.  
We refer to Section~\ref{Sect_Background} for some background material and further details.

Critical eigensections were first introduced by Taubes in his foundational work on the zero sets of 
$\mathbb Z/2$-harmonic spinors \cite{taubes2014zero}, albeit the terminology we use appeared later~\cite{chen2024existence}. In Taubes' work $\mathbb Z/2$ eigensections 
on $S^2$ arise as local models governing the tangent behaviour near singular points.  
This perspective was subsequently developed in a systematic way by Taubes--Wu 
\cite{taubes2020examples,taubes2021topological}, who carried out a detailed analytic and
topological study of the associated eigenvalue problem, established the structure of
critical eigensections, and exhibited explicit families of singularity models on the sphere.  
These results provide a robust framework in which critical $\mathbb Z/2$ eigensections serve as 
the fundamental local models for degenerations of $\Z/2$ harmonic sections.

All critical eigensections necessarily lie in eigenspaces of dimension at least four, 
a consequence of the $\mathrm{SO}(3)$-symmetry of the Laplacian and the multiplicity
phenomena described in \cite{taubes2020examples,taubes2021topological}.  
Motivated by this observation, we call a critical eigensection \emph{minimal} if the 
corresponding eigenspace has dimension exactly four.  
The basic examples of minimal $\mathbb Z/2$ eigensections are the Taubes--Wu
tetrahedral eigensections \cite{taubes2020examples}; it is proved in
\cite{chen2024existence} that these tetrahedrally symmetric solutions are
non-degenerate and deformation rigid.  
Within the class of minimal eigensections, the \emph{non-degenerate} ones---characterised by a natural 
non-vanishing condition on their leading local coefficients---play a particularly 
important role and appear in several geometric and topological contexts
\cite{donaldson2019deformations,HeParker2024Z2Harmonic,Parker2025DeformationsZ2Spinors}.
One can think of critical, non-degenerate, minimal eigensections, which are the main players below, as `generic' among critical eigensections, albeit we do not make an attempt to justify this claim.  

The goal of this article is to understand the deformation theory of such eigensections.
Given a configuration $\mbp$ and a critical, non-degenerate, minimal eigensection $f$, 
we ask whether $f$ persists under small deformations of $\mbp$, or whether it is rigid up 
to the natural $\mathrm{SO}(3)$-action.  
Our main result, whose proof can be found on Page~\pageref{Pf_MainThm} below, shows that the latter holds.

\begin{theorem}
	\label{Thm_DeformRigid}
	A minimal non-degenerate critical $\Z/2$ eigenfunction $f\in \Gamma\big (\MI_{\mathbf p}\big )$ is deformation rigid.
\end{theorem}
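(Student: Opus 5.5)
We would set this up as an implicit-function-type argument on the space of configurations. Write $\mathcal C_{2n}$ for the $4n$-dimensional manifold of unordered $2n$-point configurations on $S^2$. Let $\mathcal M\subset\mathcal C_{2n}$ be the set of configurations admitting a critical eigensection. \emph{Deformation rigidity} of $f$ means that near $\mbp$ the set $\mathcal M$ coincides with the $\mathrm{SO}(3)$-orbit of $\mbp$, which is three-dimensional since the stabiliser of $2n\ge 4$ generic points is finite.

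\textbf{Step 1: local parametrisation.} For $\mbp'$ near $\mbp$, pull back by a family of diffeomorphisms $\phi_{\mbp'}$ that move $\mbp$ to $\mbp'$ and equal the identity away from small discs. This turns the problem into a family of operators on a fixed bundle $\MI_{\mbp}$, acting on the Friedrichs-type domain of sections with $|u|=O(r^{1/2})$. The eigenvalue $\La$ of $f$ has a four-dimensional eigenspace $E$, by minimality. By standard Kato perturbation theory (the operators form a holomorphic family of type (B)), the spectral projection onto the cluster near $\La$ depends smoothly on $\mbp'$ and has rank $4$. We reduce to a $4\times4$ symmetric matrix $A(\mbp')$ on $E$ via Lyapunov--Schmidt.

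\textbf{Step 2: criticality as equations.} Each element $u$ of the eigenspace has near $p_j$ an expansion $u=\mathrm{Re}\big(a_j(u) z^{1/2}\big)+O(r^{3/2})$, where $a_j(u)\in\mathbb C$. Criticality of $u$ means $a_j(u)=0$ for all $j$, which gives $2\cdot 2n=4n$ real linear conditions. Hence $\mbp'\in\mathcal M$ iff some $u$ in a (smoothly varying) eigenspace satisfies $2n$ complex linear conditions. Combined with the eigenvalue condition, this defines a map $F$ from $\mathcal C_{2n}\times(\text{projectivised }E\text{-bundle})\times\R$ to $\mathbb C^{2n}\times(\text{equations that }u\text{ is an eigenvector})$, and $\mathcal M$ is locally the projection of $F^{-1}(0)$.

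\textbf{Step 3: the linearisation (main obstacle).} We need that $dF$ at $(\mbp,[f],\La)$ has kernel exactly the tangent space of the $\mathrm{SO}(3)$-orbit. The key input is a Hadamard-type variation formula. Moving $p_j$ in direction $v_j\in\mathbb C$ changes the $z^{1/2}$-coefficient of the perturbed section by an amount governed by the leading $z^{3/2}$ coefficient $b_j$ of $f$ at $p_j$; this is a term like $-\tfrac32 b_j v_j$ plus a correction from the eigenspace component. Non-degeneracy means $b_j\neq0$ for all $j$, so multiplication by $b_j$ is invertible. Thus the linearised criticality map is essentially $v\mapsto (b_jv_j)_j$ plus a finite-rank term coming from the four-dimensional $E$ and from $\delta\La$.

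A dimension count then runs as follows. The unknowns are $4n$ (configuration) $+3$ (direction in $\mathbb P E$) $+1$ (eigenvalue). The equations are $4n$ (criticality) plus the eigenvalue equations restricted to $E$, and the orbit contributes a three-dimensional kernel. To pin down $\ker dF$ we would use the identity obtained by pairing the variation with $f$ and with the other three eigensections $g\in E$. Integrating by parts on $S^2$ minus small discs, the boundary terms are $\sum_j\mathrm{Re}(b_j(f)\,\overline{a_j(g)}\,v_j)$-type residues. Since $E\cong\R f\oplus\{\text{infinitesimal rotations of }f\}$, these pairings are controlled, and we expect the extra finite-rank term to be exactly cancelled by rotations; this is where minimality, namely that $E$ is spanned by $f$ and $\mathfrak{so}(3)\cdot f$, is used.

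The likely hard point is verifying that $\mathfrak{so}(3)\cdot f$ is three-dimensional and lies in $E$, and that the residue pairing has no further kernel. We would try to show this by computing $a_j(X\cdot f)$ for $X\in\mathfrak{so}(3)$: differentiating $f$ along a rotation turns $b_j z^{3/2}$ into $\tfrac32 b_j X(p_j) z^{1/2}$, so $a_j(X\cdot f)=\tfrac32 b_j X(p_j)\ne0$ generically. Once $\ker dF$ equals the orbit directions, the implicit function theorem (or a Kuranishi-type argument with vanishing obstruction) shows $\mathcal M$ is locally a three-manifold containing the orbit. Hence it equals the orbit, proving Theorem~\ref{Thm_DeformRigid}.
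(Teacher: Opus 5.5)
Your proposal uses the same mechanism as the paper's proof. The trace of the transported eigen-family varies like $\Gamma(\mbv)=\tfrac32\,\mathbf b\cdot\mbv$ modulo traces of elements of $E=V_\lambda(\mbp)$. Non-degeneracy makes $\Gamma$ invertible. Your identity $a_j(X\cdot f)=\tfrac32 b_jX(p_j)$, i.e.\ $\Tr(X\cdot f)=\Gamma(X_{\mbp})$, together with minimality ($E=\R f\oplus\mathfrak{so}(3)\cdot f$ and $\Tr f=0$) gives $\Tr(E)=\Gamma(H_1)$. Hence $\Gamma(\mbv)\in\Tr(E)$ forces $\mbv\in H_1$.

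What differs is the packaging. The paper never treats $\lambda$ as an unknown and never imposes eigenvector equations. It gauge-fixes a line $L_{\mbp}\subset V(\mbp)$ with the functionals $\chi_j$, passes to $U_{\mbp_0}/\SO(3)$, and works modulo $H_2(\mbp)=\Tr(L_{\mbp}^\perp)$. The linearisation is then a square isomorphism $T_{\mbp_0}(S^2)^{2n}/H_1\to\C^{2n}/H_2(\mbp_0)$. Isolatedness of $[\mbp_0]$ plus the splitting $f=f_1+f_2$ then excludes critical elements of the whole four-dimensional cluster $V(\mbp)$ at once. Your Lyapunov--Schmidt version avoids the quotient-bundle construction, but it needs two small additions:
\begin{itemize}
\item a compactness remark: critical lines at nearby configurations lie near $[f]$ in $\mathbb P E$, because $\ker\Tr|_E=\R f$;
\item restricting $\mathcal M$ to critical eigensections with eigenvalue in the cluster near $\lambda$, as in the paper's definition. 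Otherwise ``$\mathcal M$ equals the orbit near $\mbp$'' is not what your argument proves.
\end{itemize}

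Two statements in Step 3 need correcting.

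First, with $\lambda$ and the eigenvector equations included, $dF$ is not surjective. The first-order splitting of the cluster is governed by $B_{\mbv}$, and $B_{\mbv}(f,\cdot)=0$ because $\Tr f=0$. So those four equations linearise to $-\delta\lambda\,f$ alone, $dF$ has a three-dimensional cokernel, and there is no ``vanishing obstruction''. The conclusion still holds because the orbit fills $\ker dF$. The cleanest route is to drop the eigenvector equations. The trace equations alone, on configurations $\times\,\mathbb P E$ (dimension $4n+3$), have linearisation $(\mbv,\delta u)\mapsto\Gamma(\mbv)+\Tr(w_{\mbv})+\Tr(\delta u)$ with $w_{\mbv}\in E$. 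This map is onto $\C^{2n}$, so by the implicit function theorem their zero set is a $3$-manifold containing the lifted orbit, hence equal to it.

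Second, the fact $w_{\mbv}\in E$ does not come from residues of the form $\Re(b_j\overline{a_j(g)}v_j)$. The Wronskian of two $r^{1/2}$ terms vanishes, so no such residues occur. The needed input is the paper's integration by parts. One has $(\Delta-\lambda)(\dot u-\nabla_{\mbv}f)=\dot\lambda f$. Pairing with $f$, the boundary terms vanish since $f=O(r^{3/2})$ and $\dot u-\nabla_{\mbv}f=O(r^{1/2})$. This gives $\dot\lambda=0$, hence $\dot u-\nabla_{\mbv}f\in E$.

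With this in place, the ``hard point'' you flag is routine:
\begin{itemize}
\item $\mathfrak{so}(3)\cdot f\subset E$ because $[\Delta,L_i]=0$;
\item it is three-dimensional because $\Gamma$ is injective and the infinitesimal stabiliser of $2n\ge4$ points is trivial;
\item what matters is the identity $\Tr(X\cdot f)=\Gamma(X_{\mbp})$, not that these traces are ``generically non-zero''.
\end{itemize}
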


To explain, the deformation rigidity means that for any sufficiently small deformation $\tilde\mbp$ of $\mbp$ that still admits
a nearby critical eigensection $\tilde f$ there exists some $R\in \mathrm{SO}(3)$ such that $\tilde\mbp = R\,\mbp$ and $\tilde f = f\circ R$. 
This establishes a general rigidity phenomenon extending the symmetric tetrahedral models 
of Taubes--Wu, and it complements recent work on the existence and structure of critical 
$\mathbb Z/2$ eigenvalues 
\cite{chen2024existence,HaydysSalm2025Z2Sphere} and recent applications \cite{FranceschiniMazzeo2026MinimalSurfaces}.

\medskip
\textbf{Acknowledgements.} 
The authors thank Jiahuang Chen and Rafe Mazzeo for helpful comments.

\section{Background on  \texorpdfstring{$\ZT$}{Z/2} eigensections}
\label{Sect_Background}
In this section we recall the basic setup for $\mathbb Z/2$ eigensections on the sphere, which have been studied and developed in \cite{taubes2014zero,taubes2020examples,taubes2021topological}.
\subsection{\texorpdfstring{$\ZT$}{Z/2} eigensections}
Let $\MC_{2n}$ be the space of unordered configurations of $2n$ distinct points on $(S^2,g_0)$ with $g_0$ being the round metric.
For $\mbp\in\MC_{2n}$, let $\MI_{\mbp}\to S^2\setminus\mbp$ be the unique flat real Euclidean line bundle with monodromy $-1$ around any small loop linking a single point of $\mbp$.
Let
\[
\Delta:\Gamma(\MI_{\mbp})\longrightarrow \Gamma(\MI_{\mbp})
\]
denote the Laplace operator associated with the flat connection on $\MI_{\mbp}$.

Let $\mbH_{\mbp}$ be the completion of $C_c^\infty(S^2\setminus\mbp;\MI_{\mbp})$ with respect to the
energy norm
\[
\|f\|_{\mbH_{\mbp}}^2 \;:=\; \int_{S^2\setminus\mbp} |df|^2 .
\]
A section $f\in \mbH_{\mbp}$ is called a $\ZT$ eigensection at $\mbp$ if
\[
\Delta f \;=\; \lambda f 
\]
for some eigenvalue $\lambda\ge 0$, where the above equality is understood in the sense of distributions on  $S^2\setminus\mbp$. 
For a fixed eigenvalue $\lambda$, denote by 
\[
V_\lambda(\mbp)\;:=\;\{\,f\in \mbH_{\mbp}\;\mid\; \Delta f=\lambda f\,\}
\]
the corresponding eigenspace and write $\mul \lam:=\dim V_\lambda(\mbp)$. 
Notice that we have the orthogonal decomposition $$\mbH_{\mbp}=\oplus_{\lambda}V_{\lambda}(\mbp).$$

\subsection{Local expansions and non-degenerate eigenfunctions}  
Given $p\in\mbp$, let $z=re^{i\theta}$ denote the complex coordinate centred at $p$ obtained from the stereographic projection from $-p$.  
In this coordinate the Laplace operator can be written as
\[
\Delta=\Bigl(\frac{r^2+1}{2}\Bigr)^2
\Bigl(\partial_r^2+\frac1r\,\partial_r+\frac{1}{r^2}\partial_{\theta}^2\Bigr).
\]

Think of sections of $\MI_\mbp$ on a punctured disc $0<|z|<\rho$ as `functions' changing the sign if one goes along a circle centred at the origin just like $\Re  \sqrt z$ does. 
Then any eigensection $f\in\mbH_{\mbp}$ admits a convergent asymptotic expansion
\begin{equation}\label{eq_local_expansion}
	f(z)\sim \Re\!\left(a_p\,z^{1/2}+b_p\,z^{3/2}\right)+\MO(r^{5/2}),
\end{equation}
where $a_p$ and $b_p$ are complex numbers, well defined up to an overall sign.

Let $\pi:\Sigma_{\mbp}\to S^2$ denote the double branched cover with branch locus $\mbp$.  
If $f$ is an eigensection, then its pullback
$\tilde f:=\pi^*f$ is an \emph{odd} eigenfunction of the conical Laplacian of $\pi^*g_{S^2}$, where the pulled-back metric has cone angle $4\pi$ at each branch point.  
Moreover, near a branch point one may choose a local coordinate $w$ on $\Sigma_{\mbp}$ such that the covering map is given by $\pi(w)=w^2=z$.  
A choice of such local coordinates on $\Sigma_{\mbp}$ fixes the signs of the coefficients $a_p$ and $b_p$.  
Throughout the remainder of this article, we make this choice and, hence, consider $a_p$ and $b_p$ as well-defined complex numbers.  
Notice that for any configuration $\tilde\mbp$ sufficiently close to $\mbp$, the corresponding coordinate $\tilde w$ on $\Sigma_{\tilde\mbp}$ is canonically defined.  

It is convenient to choose an order on $\mbp$ so that we can write $\mbp=(p_1,\ldots,p_{2n})$. 
For any finite sum $f\in\mbH_\mbp$ of eigensections we have well-defined  coefficients $a_{p_i}$ and $b_{p_i}$ as in \eqref{eq_local_expansion}.  
We say that the vector
\[
\Tr(f):=(a_{p_1},\ldots,a_{p_{2n}})\in\C^{2n}
\]
encoding the lowest order terms is \emph{the trace} of $f$.

\begin{definition}
	An eigensection $f$ is said to be \emph{critical} if $\Tr(f)=0$.  
	An eigenspace $V_{\lambda}$ (or the eigenvalue $\lambda$) is called \emph{critical} if it contains a critical eigensection.  
	Furthermore, a critical eigensection $f$ is said to be \emph{non-degenerate},  if $b_{p_i}\neq 0$ for all $i$.
\end{definition}

\subsection{Deformations of \texorpdfstring{$\ZT$}{Z/2} eigenvalues}
For a fixed configuration $\mbp$, the spectrum of the Laplacian on $\mbH_\mbp$ is discrete. However, understanding  the dependence of this spectrum on $\mbp$ is crucial for the proof of our main result. 
We begin with the following.

\begin{proposition}{\cite[Proposition 2.5, Proposition 2.6]{taubes2021topological}}
	\label{prop_deformation}
	Fix some $\mathbf{p}\in\mathcal{C}_{2n}$ and let $f\in \mbH_{\mbp}$ be an eigensection of the Laplacian corresponding to an eigenvalue $\lambda(\mathbf{p})$. For $p\in\mbp$, denote by $a_p(f)$ the leading coefficient of $f$ near $p$ as above. 
	%Choosing a complex coordinate $z$ near each $p\in\mathbf{p}$,  $f$ has an asymptotic expansion near $p$ as in~\eqref{eq_local_expansion}. With these choices at hand, the leading coefficients $a_p(f)$ are well-defined up to a sign. 
	
	If $\mathrm{mul}\,\lambda(\mathbf{p}) =1$, then there is a neighbourhood $U$ of $\mbp$ in $\mathcal{C}_{2n}$, such that there is a smooth function $\lambda(-)$ defined on $U$ with the following properties:
	\begin{enumerate}
		\item The value of $\lambda{(-)}$ at $\mathbf{p}$ is $\lambda(\mathbf{p})$;
		\item If $\mathbf{q}$ is contained in $U$, then $\lambda{(\mathbf{q})}$ is an eigenvalue of the Laplacian on $\mbH_\mbq$ with $\mathrm{mul}\,\lambda{(\mathbf{q})}=1$;
		\item If $\mbv$ is a tangent vector of $\mathcal{C}_{2n}$ at $\mathbf{p}$, then the directional derivative of $\lambda{(-)}$ along $\mbv$ is $\frac{\pi}{2}\sum_{p\in \mathbf{p}_f}\Re\,(v(p)a_{p}(f)^2)$, where $f$ denotes an eigensection corresponding to the eigenvalue $\lambda(\mathbf{p})$ with $\|f\|_{L^2}=1$.
	\end{enumerate}
	
	If $N:=\mathrm{mul}\,\lambda(\mathbf{p})>1$, then there exist a neighbourhood $U$ of $\mbp$ and a set of $N$ continuous functions $\mu_{i}(-)(i=1,\cdots,N)$ defined on $U$ with the following properties:
	\begin{enumerate}
		\item $\mu_{i}(\mathbf{p})=\lambda(\mathbf{p})$ for $i=1,\cdots,N$;
		\item If $\mathbf{q}\in U$, then each $\mu_{i}(\mathbf{q})$ is an eigenvalue of the Laplacian on $\mbH_\mbq$;
		\item 
		For each tangent vector $\mbv$ at $\mbp$ consider the symmetric bilinear form on the eigenspace $V_{\lambda(\mathbf{p})}$ defined by 
		\begin{align}
			B_{\mbv}(f,f'):=\frac{\pi}{2}\sum_{p\in\mathbf{p}_f\cap\mathbf{p}_{f'}}\Re\,\big(v(p)a_p(f)a_p(f')\big).\label{TWblf}
		\end{align}
Denote by $\eta_1(\mbv),\cdots,\eta_N(\mbv)$ the eigenvalues of this bilinear form and chose an $L^2$-orthonormal basis $\{f_1,\cdots,f_N\}$ of $V_{\La(\mbp)}$ such that $B_{\mbv}(f_i,f_j)=\eta_i(\mbv)\delta_{ij}$. If $\mathbf{p}(t)$ is a smooth path of configurations such that $\mathbf{p}(0)=\mathbf{p}$ and $\dot\mbp(0)=\mbv$, then for $t$ small, there exist $L^2$ normalized eigensections $f_{i,t}$ with eigenvalues $\mu_i(\mbp(t))$ such that 
		\begin{equation}
			f_{i,t}=f_i+\mathcal{O}(t),\qquad\mu_i(t)=\lambda(\mbp)+t\eta_i(\mbv)+o(t).
		\end{equation}
	\end{enumerate}
\end{proposition}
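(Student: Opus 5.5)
The plan is to reduce everything to perturbation theory for a smooth family of quadratic forms on one fixed Hilbert space, and then to compute the first variation by a residue computation at the branch points.

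\emph{Step 1: transport to a fixed space.} For $\mbq$ near $\mbp$ choose diffeomorphisms $\phi_\mbq$ of $S^2$ depending smoothly on $\mbq$, with $\phi_\mbp=\mathrm{id}$ and $\phi_\mbq(p_i)=q_i$. Each $\phi_\mbq$ is to be supported in small disjoint discs around the points of $\mbp$, and on a smaller disc around $p_i$ it should be a translation in the stereographic coordinate $z$. Pulling back by $\phi_\mbq$ identifies $\MI_\mbq$ with $\MI_\mbp$ and $\mbH_\mbq$ with $\mbH_\mbp$. The Laplace eigenproblem on $\mbH_\mbq$ becomes the generalized eigenproblem for the pair of forms
\[
Q_\mbq(f,f')=\int\langle df,df'\rangle_{g_\mbq}\,d\mathrm{vol}_{g_\mbq},\qquad
N_\mbq(f,f')=\int ff'\,d\mathrm{vol}_{g_\mbq},\qquad g_\mbq=\phi_\mbq^*g_0 .
\]
Since the metrics $g_\mbq$ are smooth and uniformly comparable to $g_0$, these are bounded forms on $\mbH_\mbp$ depending smoothly on $\mbq$. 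The embedding $\mbH_\mbp\hookrightarrow L^2$ is compact, which is where the discreteness of the spectrum enters.

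\emph{Step 2: abstract perturbation.} Min--max for $Q_\mbq/N_\mbq$ shows that the eigenvalues depend continuously on $\mbq$; the $N$ eigenvalues near $\lambda(\mbp)$ give the functions $\mu_i$. Kato--Rellich perturbation theory for this smooth family of forms yields the remaining conclusions. When the multiplicity is $1$ it gives a smooth branch $\lambda(\mbq)$ and a smooth normalized eigenvector. In general it gives the first-order expansion $\mu_i(t)=\lambda+t\eta_i+o(t)$, with $f_{i,t}=f_i+\MO(t)$, where the $\eta_i$ are the eigenvalues, with respect to $N_\mbp$, of the form
\[
\dot B(f,f')=\tfrac{d}{dt}\big|_{t=0}\big(Q_{\mbp(t)}-\lambda N_{\mbp(t)}\big)(f,f'),\qquad f,f'\in V_\lambda(\mbp),
\]
and the $f_i$ diagonalize $\dot B$ on $V_\lambda(\mbp)$. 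It remains to show that $\dot B=B_\mbv$.

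\emph{Step 3: identify $\dot B$.} Let $X$ be the vector field generating $\phi_{\mbp(t)}$ at $t=0$; near each $p$ it equals the constant vector $v(p)$ in the $z$-coordinate. Then $\dot B(f,f')=-\int\langle T_{f,f'},\mathcal L_Xg_0\rangle$, where $T_{f,f'}$ is the (polarized) stress--energy tensor $df\otimes df'-\tfrac12(\langle df,df'\rangle-\lambda ff')g_0$. Since $\Delta f=\lambda f$ and $\Delta f'=\lambda f'$, this tensor is divergence free on $S^2\setminus\mbp$. Integrating by parts on the complement of $\varepsilon$-discs therefore turns $\dot B$ into a sum of boundary integrals $\oint_{|z|=\varepsilon}T(X,\nu)$. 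In conformal coordinates the trace-free part of $T$ is the real part of the quadratic differential $4\,\partial_zf\,\partial_zf'\,dz^2$. Inserting the expansion \eqref{eq_local_expansion} gives $\partial_zf\,\partial_zf'=\tfrac{a_p(f)a_p(f')}{4z}+\MO(1)$. As $\varepsilon\to0$ the boundary integral therefore tends to a residue, namely a fixed multiple of $\Re\big(v(p)a_p(f)a_p(f')\big)$. The trace terms and the contributions of the cross terms with $b_pz^{3/2}$ are $\MO(\varepsilon^{1/2})$ or smaller and vanish in the limit. Fixing the constant, which involves the conformal factor at $z=0$ and the normalization of $L^2$, gives $\tfrac{\pi}{2}$.

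\emph{Main obstacle.} I expect the difficulty to lie in Step 3. One must justify the integration by parts for sections with $|df|\sim r^{-1/2}$, which requires the expansion \eqref{eq_local_expansion} holding with derivatives. One must also track all constants carefully to obtain exactly $\frac{\pi}{2}\Re(v\,a^2)$; in particular the sign conventions of $z$ and $a_p$ must be consistent, which is ensured by the canonical choice of the coordinate $w$ on nearby double covers $\Sigma_{\tilde\mbp}$. Steps 1 and 2 are standard.
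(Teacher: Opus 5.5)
The paper gives no proof of this proposition; it is quoted from Taubes--Wu, so there is no in-paper argument to match. Your route is sound.

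\begin{itemize}
\item Pulling back to $\mbH_\mbp$ gives forms $(Q_\mbq,N_\mbq)$ that are smooth in $\mbq$ in operator norm, and $\mbH_\mbp\hookrightarrow L^2$ is compact.
\item Min--max and Riesz-projection perturbation theory then give the smooth simple branch, the continuous $\mu_i$, and $\mu_i(t)=\lambda+t\eta_i+o(t)$.
\item $\dot B=-\int\langle T,\mathcal L_Xg_0\rangle$, with $T$ divergence free, is the correct first variation.
\end{itemize}

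The natural comparison is with the technique the paper itself uses in the proof of Proposition~\ref{Prop_DerivTheta}. There one differentiates $\Delta_{g_t}f_t=\lambda_tf_t$ using $\dot\Delta=[\nabla_\mbv,\Delta]$, obtains $(\Delta-\lambda)(\dot f-\nabla_\mbv f)=\dot\lambda f$, and pairs with $f$. For non-critical $f$, Green's formula on the complement of $\varepsilon$-discs leaves the boundary term $\oint_{|z|=\varepsilon}(f\,\partial_rw-w\,\partial_rf)\,\varepsilon\,d\theta$, where $w=\dot f-\nabla_\mbv f\approx-\Re\big(\tfrac12a_pv(p)z^{-1/2}\big)$. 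This tends to $\pi\Re\big(a_p\cdot\tfrac12a_pv(p)\big)=\tfrac\pi2\Re\big(v(p)a_p^2\big)$, the same answer. That route is shorter. However, it presupposes that $t\mapsto f_t$ is differentiable with $\dot f=\MO(r^{1/2})$ near the points, and that must itself come from something like your Step~2. Your version obtains regularity, continuity and the first variation in one framework.

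\textbf{First correction: the constants in Step~3.} As written they are off, and the conformal factor plays no role.
\begin{itemize}
\item Since $\partial_z\Re(az^{1/2})=\tfrac a4z^{-1/2}$, one has $\partial_zf\,\partial_zf'=\tfrac{a_p(f)a_p(f')}{16z}+\MO(1)$, not $\tfrac{a_pa'_p}{4z}$.
\item The trace-free part of $df\odot df'$ is $2\Re(\partial_zf\,\partial_zf'\,dz^2)$.
\item Since $\langle T,\mathcal L_Xg_0\rangle=2\langle T,\nabla X\rangle$, one gets $\dot B=2\sum_p\lim_{\varepsilon\to0}\oint_{|z|=\varepsilon}T(X,\nu)\,ds$, with $\nu$ pointing away from $p$.
\item Using $dz(X)=v(p)$, $dz(\partial_r)=e^{i\theta}$ and $\varepsilon e^{i\theta}d\theta=-i\,dz$, each circle integral tends to $2\Re\big(v(p)\cdot2\pi\cdot\tfrac1{16}a_p(f)a_p(f')\big)=\tfrac\pi4\Re\big(v(p)a_p(f)a_p(f')\big)$. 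Hence $\dot B=B_\mbv$ exactly.
\end{itemize}
No conformal factor enters. The quantity $T^{\mathrm{tf}}(X,\nu)\,ds$ is conformally invariant, and the trace part $\tfrac12\lambda ff'g_0$ contributes only $\MO(\varepsilon^2)$. Also, $\Re(va^2)$ is unchanged under $z\mapsto cz$, so all that matters is that $v(p)$ and $a_p$ are measured in the same coordinate.

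Your worry about integrating by parts is unnecessary. Write $g_0=e^{2\sigma}|dz|^2$. Near $p$ the field $X$ is a $z$-translation, so $\mathcal L_Xg_0=2X(\sigma)g_0$ and $\langle T,\mathcal L_Xg_0\rangle=2X(\sigma)\lambda ff'$ is bounded there. Only the once-differentiated expansion on the circles is used.

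\textbf{Second correction: the eigenvector claim in Step~2.} Perturbation theory does not give $f_{i,t}=f_i+\MO(t)$ for an arbitrary $B_\mbv$-diagonalizing orthonormal basis; it does so only when $\eta_i$ is a simple eigenvalue of $B_\mbv$. If $\eta_i$ is repeated (e.g.\ $\dot\mbp(0)=0$, where every basis is admissible), the limiting eigenvectors are selected by higher-order terms. For merely smooth families they need not converge at all (Rellich's example). What you do get is that $f_{i,t}$ lies within $\MO(t)$ of the $\eta_i$-eigenspace of $B_\mbv$. This is an imprecision in the statement as quoted rather than in your method, but you should not attribute the stronger claim to Kato--Rellich.
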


Consider $S^2$ as a subspace of $\mbR^3$ and denote by $(x_1,x_2,x_3)$ its standard coordinates. 
The vector fields on $S^2$ given by 
\begin{equation}
	\label{eq_rotation}
	\begin{split}
		L_1 = x_2\,\partial_{x_3}-x_3\,\partial_{x_2},\qquad
		L_2 = x_3\,\partial_{x_1}-x_1\,\partial_{x_3},\qquad
		L_3 = x_1\,\partial_{x_2}-x_2\,\partial_{x_1}
	\end{split}
\end{equation}
represent the infinitesimal generators of the $\mathrm{SO}(3)$-action. 
Using the flat structure on $\MI_{\mbp}$, each $L_i$ acts on sections and defines a linear map
\[
L_i:\Gamma(\MI_{\mbp})\to \Gamma(\MI_{\mbp}),
\]
satisfying $[\Delta,L_i]=0$.  
Therefore, if $f\in V_{\lambda}$ is a critical eigensection, then $L_i f$ is also an eigensection corresponding to the same eigenvalue $\lambda$ so that a critical eigenvalue is never simple. 
More precisely, we have the following result:

\begin{proposition}[{\cite[Proposition~4.5, Theorem~1.5]{chen2024existence}, \cite{taubes2021topological}}]
	Let $\mbp\in\MC_{2n}$ with $2n\ge 2$, and let $f\in V_{\lambda}$ be a critical eigensection.  
	Then the eigensections
	\[
	f,\; L_1 f,\; L_2 f,\; L_3 f
	\]
	are linearly independent.  
	In particular, $\mathrm{mul}\,V_{\lambda}\ge 4$.
\end{proposition}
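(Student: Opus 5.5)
The plan is to argue by contradiction: a nontrivial linear relation among $f, L_1f, L_2f, L_3f$ should force $f$ to be invariant, up to a scalar factor, under a one-parameter group of rotations, and this is incompatible with the monodromy of $\MI_\mbp$ unless $f=0$.

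\emph{Step 1: $L_if$ is an eigensection in $\mbH_\mbp$.} This is where criticality enters. Near $p\in\mbp$ we have $f=\Re(b_p z^{3/2})+\MO(r^{5/2})$, with the expansion differentiable termwise. A smooth vector field $L=\sum c_iL_i$ applied to $f$ therefore has the form $\Re\big(\tfrac32 L(p)\,b_p z^{1/2}\big)+\MO(r^{3/2})$, where $L(p)$ denotes the complex number representing the vector $L$ at $p$. Its differential is $\MO(r^{-1/2})$, and $\int r^{-1}\,r\,dr<\infty$, so $Lf\in\mbH_\mbp$. Since $[\Delta,L_i]=0$, we get $\Delta(Lf)=\lambda Lf$ distributionally on $S^2\setminus\mbp$. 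For a non-critical $f$ this fails: the $z^{1/2}$-term would produce a $z^{-1/2}$-term of infinite energy.

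\emph{Step 2: reduce a relation to an eigen-equation for a rotation.} Suppose $c_0f+\sum_i c_iL_if=0$ with real $c_j$ not all zero. If $c=(c_1,c_2,c_3)=0$, then $c_0f=0$ and so $c_0=0$, a contradiction. Otherwise, after rotating coordinates and rescaling, $L=\sum c_iL_i$ is the generator $L_3$ of rotations $R_t$ about the $x_3$-axis, and $L_3f=\mu f$ with $\mu\in\R$. Let $\gamma$ be a latitude circle (an orbit of $R_t$) avoiding $\mbp$. Along $\gamma$, parallel transport in the flat bundle gives $f(R_tx)=e^{\mu t}f(x)$ in a flat trivialisation along the path $t\mapsto R_tx$. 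Going once around ($t=2\pi$) returns $e^{2\pi\mu}f(x)=\epsilon_\gamma f(x)$, where $\epsilon_\gamma=\pm1$ is the monodromy of $\MI_\mbp$ around $\gamma$. So either $f|_\gamma\equiv0$, or $e^{2\pi\mu}=\epsilon_\gamma$, which forces $\mu=0$ and $\epsilon_\gamma=1$.

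\emph{Step 3: conclude $f=0$.} First suppose some point $p\in\mbp$ lies off the $x_3$-axis. The latitude circles just above and just below $p$ bound caps whose numbers of enclosed points of $\mbp$ differ by the number of points on the latitude of $p$. I would perturb the analysis so as to use circles whose monodromy is $-1$; the easiest clean route is the following. Whenever $\mu\neq0$, Step 2 gives $f|_\gamma=0$ on every orbit $\gamma$ avoiding $\mbp$, hence $f\equiv0$. Whenever $\mu=0$, $|f|$ is constant along each orbit avoiding $\mbp$. Letting orbits accumulate at $p$ and using $|f|\le C\dist(\cdot,p)^{3/2}$, $f$ vanishes on the orbit through $p$ minus $\mbp$. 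By rotation invariance of $\Delta$, $\partial_\nu^k f$ also vanishes there, and unique continuation gives $f=0$. If instead $\mbp$ lies on the axis, then $\mbp$ consists of the two poles. Every latitude circle then encloses exactly one point of $\mbp$, so $\epsilon_\gamma=-1$, Step 2 gives $f|_\gamma=0$ for all $\gamma$, and again $f=0$. In every case we contradict $f\neq0$, so the four sections are independent and $\mathrm{mul}\,V_\lambda\ge4$.

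\emph{Main obstacle.} I expect the delicate step to be the $\mu=0$ case with branch points off the axis. One must check that rotation invariance really propagates vanishing, including normal derivatives, from the orbit through $p$ into an open set so that unique continuation applies. I would first try the direct argument: vanishing of $f$ on an orbit through $p$, combined with the separation-of-variables ODE in the latitude variable ($f$ is independent of the angle), forces $f\equiv0$ on the adjacent annulus.
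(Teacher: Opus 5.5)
Your Steps 1 and 2, and the case where $\mbp$ lies on the axis, are fine. The holonomy argument that disposes of $\mu\neq0$ is a clean elementary alternative to the skew-adjointness computation $\mu\|f\|^2_{L^2}=\langle L_3f,f\rangle_{L^2}=0$. The paper itself gives no proof here and only cites Chen--He and Taubes--Wu.

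The gap is where you suspected: the case $\mu=0$ with some $p\in\mbp$ off the axis. Once you know $f=0$ on $\gamma_p\setminus\mbp$, neither of your mechanisms finishes the argument. Rotation invariance of $\Delta$ gives no information about $\partial_\nu f$ along $\gamma_p$. The latitude ODE is second order, so the single condition $g(s_0)=0$ does not force $g\equiv0$. Locally away from $\mbp$, nothing in your argument distinguishes $f$ from the spherical harmonic $x_3|_{S^2}=\cos s$. That function is invariant under rotation about the $x_3$-axis, is an eigenfunction, vanishes on the equator, and is nonzero. So as written the $\mu=0$ case is not proved.

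The missing second initial condition is already contained in your estimate. Since $|f|$ is constant on each nearby orbit $\gamma_s$, and $\gamma_s$ contains a point at distance $|s-s_0|$ from $p$, you get $|f|\le C|s-s_0|^{3/2}$ on all of $\gamma_s$. Now restrict $f$ to a short meridian segment through a point $q\in\gamma_p\setminus\mbp$, where $f$ is smooth. This gives a smooth $g$ with $|g(s)|\le C|s-s_0|^{3/2}$, hence $g(s_0)=g'(s_0)=0$. Near $q$ the section is parallel along orbits, so in a local trivialisation $f=g(s)$. Then $\Delta f=\lambda f$ is a linear second-order ODE for $g$ whose coefficients are smooth at $s_0\in(0,\pi)$; this is exactly where $p$ being off the axis is used. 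Thus $g\equiv0$ near $s_0$, so $f$ vanishes on an open set, and unique continuation on the connected surface $S^2\setminus\mbp$ gives $f\equiv0$.

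Alternatively, and without criticality, trivialise $\MI_\mbp$ on a small disc around $p$ slit along the arc of $\gamma_p$ on one side of $p$. Then $f=g(s)$ on the slit disc, with $g$ smooth across the unslit arc. Since the trivialisation changes sign across the slit, smoothness of $f$ there forces $g^{(k)}(s_0)=-g^{(k)}(s_0)$ for all $k$. With either repair your proposal becomes a complete proof. With the second repair, criticality is needed only in Step 1, to place the $L_if$ in $V_\lambda$.
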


This proposition motivates the following. 

\begin{definition}
	We say that a critical eigensection $f\in V_{\lambda}$ is \emph{minimal} if $\dim V_{\lambda}=4$.
\end{definition}
By \cite[Section 5.3]{chen2024existence}, Taubes--Wu tetrahedral eigensections found in \cite{taubes2020examples} are minimal.

\section{Deformation rigidity}
In this section, we will prove that critical eigensections are deformation-rigid under the hypothesis of Theorem~\ref{Thm_DeformRigid}. 

First observe that the standard action of $\SO(3)$ on $S^2$ yields an action on $\MC_{2n}$. For $R\in\SO(3)$, we therefore obtain a map $\mbH_{\mbp}\to \mbH_{R\cdot\mbp}$,\  $(R\cdot f)(x) = f(R^{-1}x)$, which maps each $V_\lambda(\mbp)$ isomorphically onto $V_\lambda (R\cdot \mbp)$.  

\begin{definition}
	A critical eigensection $f_0\in V_{\lambda_0}(\mbp_0)$ is called \emph{deformation rigid} if there exist a neighbourhood $U\subset \MC_{2n}$ of $\mbp_0$ and a constant $\epsilon$ with the following property: If for some $\mbp\in U$ there exists a critical eigensection $f\in V_{\lambda}(\mbp)$ with $\lambda\in (\lambda_0-\epsilon, \lambda_0+\epsilon)$, then there exists $R\in SO(3)$ such that $R\cdot \mbp=\mbp_0$ and $R\cdot f=f_0$.
\end{definition}

%\begin{definition}
%	A critical eigensection $f_0\in V_{\lambda_0}(\mbp_0)$ is called \emph{deformation rigid} if there exist a neighborhood $U\subset \MC_{2n}$ of $\mbp_0$ and a constant $\epsilon$ with the following property: for all $\mbp\in U$ if there exists a critical eigensection $f\in \oplus_{|\lambda-\lambda_0|<\ep}V_{\lambda}(\mbp)$, then there exists $R\in SO(3)$ such that $R\cdot \mbp=\mbp_0$ and $R\cdot f=f_0$.
%\end{definition}

Let $\mbp_0\in \MC_{2n}$ be a configuration with $2n\geq 4$ and let $f_0\in V_{\lambda_0}(\mbp_0)$ be a critical eigensection of unit $L^2$-norm.  
In what follows, we study the deformation rigidity of $f_0$ when $f_0$ is non-degenerate and minimal.  

\begin{remark}
	If $\mbp_0$ is the Taubes--Wu tetrahedral configuration and $V_{\lambda_0}(\mbp_0)$ is a critical eigenspace, then according to~\cite[Section~5]{chen2024existence} there exists a critical eigensection $f\in V_{\lambda_0}(\mbp_0)$ such that
	\[
	V_{\lambda_0}(\mbp_0)=\mathrm{span}\{f, L_1 f, L_2 f, L_3 f\}.
	\]
	Moreover, it is proved in \cite{chen2024existence} that every Taubes--Wu tetrahedral eigensection is non-degenerate and rigid.
\end{remark}

Given $\mbp_0\in\mathcal{C}_{2n}$, we choose an order on $\mbp_0$ as in the preceding section and write
\[
\mbp_0=(p_{1,0},\ldots,p_{2n,0}).
\]
Let $U_{\mbp_0}$ be a neighbourhood of $\mbp_0$ consisting of small disjoint geodesic balls centred at each $p_{j,0}$.  
For any $\mbp\in U_{\mbp_0}$, choose a diffeomorphism
\[
\varphi_{\mbp}: S^2\to S^2
\]
depending smoothly on $\mbp$, such that $\varphi_{\mbp}(\mbp_0)=\mbp$, $\vp_{\mbp}(U_{\mbp_0})\subset U_{\mbp_0}$ and $\vp_{\mbp_0}=\Id$. 
Notice that any $p\in U_{\mbp_0}$ inherits an order so that we can write  $\mbp=(p_1,\cdots,p_{2n})$, where $p_i$ lies in the geodesic ball centred at $p_{i,0}$.

Since $2n>2$, there exist two points—without loss of generality $p_1$ and $p_2$—such that $p_1\neq -p_2$. We can choose a unique rotation $R\in \SO(3)$ so that $R\cdot p_1=(0,0,1)$ 
(the north pole) and $R\cdot p_2\in\{y=0,\ x>0\}$.  
This yields in particular that the $\SO(3)$-action on $U_{\mbp_0}$ is free, and hence $\mathcal C_{2n}/\SO(3)$
is a smooth manifold near $[\mbp_0]$.

By Proposition~\ref{prop_deformation}, there exists an $\epsilon>0$ such that for each $\mbp\in U_{\mbp_0}$ the vector space
\[
V(\mbp)
:= \bigoplus_{|\lambda-\lambda_0|<\epsilon} V_{\lambda}(\mbp)
\]
is of dimension $4$. 
Moreover, this yields a trivial rank $4$ vector bundle over  $U_{\mbp_0}$.

For each $\mbp\in U_{\mbp_0}$, elements of $\varphi_{\mbp}^*V(\mbp)$ are sections of $\MI_{\mbp_0}$.  
Define the following linear functionals
\begin{equation}
	\chi_j : V(\mbp)\to\mathbb{R},\qquad
	\chi_j(f):=\langle L_j f_0,\ \varphi_{\mbp}^* f\rangle_{L^2}
\end{equation}
and set
\[
L := \bigcap_{j=1}^3 \ker\chi_j .
\]
Recalling that the minimality of $f_0$ implies linear independence of $L_1 f_0, L_2 f_0, L_3 f_0$, we obtain  that $\chi=(\chi_1, \chi_2,\chi_3)\colon V(p)\to \mathbb R^3$ has full rank at $\mbp_0$ and therefore also for all $\mbp$ sufficiently close to $\mbp_0$. 
We conclude that $L$ is a real line bundle over $U_{\mbp_0}$ such that $f_0\in L_{\mbp_0}$. 

Since $L$ inherits an Euclidean scalar product, there is a unique section $\mbp\mapsto f_{\mbp}\in L|_{\mbp}$ satisfying
\[
\|f_{\mbp}\|_{L^2}=1\qquad \text{and}\qquad f_{\mbp_0}=f_0
\]
so that the map 
\begin{equation}
	%\begin{split}
		\hat{\Theta}\colon U_{\mbp_0}\subset (S^2)^{2n} \longrightarrow \mathbb{C}^{2n},\qquad 
		\hat{\Theta}(\mbp):=\Tr(f_{\mbp})
	%\end{split}
\end{equation}
is well-defined. 
Write $\Tr(f_{\mbp})=(a_1,\ldots,a_{2n})$, where $a_i=a_i(\mbp)$ is the leading coefficient of $f_{\mbp}$ at $p_i$. Observe that the construction implies that $\hat\Theta$ is $\SO(3)$-invariant in the following sense: if $\mbp\in U_{\mbp_0}$ and $R\in SO(3)$ is such that $R\cdot\mbp\in U_{\mbp_0}$, then $\hat{\Theta}(R\cdot \mbp)=\hat{\Theta}(\mbp)$.

Let  $H_1$ denote the infinitesimal action of $\SO(3)$ at $\mbp_0$, i.e.,
\begin{equation}
	H_1:=\Span\left\{\frac{d}{dt}\Big|_{t=0}(g_t\cdot \mbp_0)\;:\; g_t\in SO(3)\right\}
	\subset T_{\mbp_0}(S^2)^{2n}.
\end{equation}  
%While the $\SO(3)$-action on $\mathcal C_{2n}$ is not free, the stabilizer of any point  $\mbp\in U_{\mbp_0}$ is trivial  as it was already observed above. Therefore, at least near $\mbp_0$ the quotient $\mathcal C_{2n}/\SO(3)$ is a manifold.  Slightly abusing notations, we write $U_{\mbp_0}/\SO(3)$ for a neighborhood of $[\mbp_0]\in \mathcal C_{2n}/\SO(3)$. 
Clearly, we have an isomorphism
\[
T_{\mbp_0}\big (U_{\mbp_0}/SO(3) \big )\cong T_{\mbp_0}\big (S^{2\,} \big)^{2n}/H_1.
\]
Also, define
\[
H_2(\mbp):=\Span\big \{\Tr(f)\mid f\in L_{\mbp}^{\perp}\subset V(\mbp) \big \}\subset \mathbb C^{2n},
\]
and note that for any $R\in SO(3)$ we have $H_2(R\cdot \mbp)=H_2(\mbp)$. In particular, we have
\begin{equation}
	\label{Eq_SubspH2}
	H_2(\mbp_0)
	=\Span\{\Tr(L_1 f_0),\,\Tr(L_2 f_0),\,\Tr(L_3 f_0)\}.
\end{equation}
A direct verification yields $\dim H_2(\mbp_0)=3$ and Proposition~\ref{prop_deformation} implies that $\dim H_2(\mbp)=3$ for all $\mbp$ sufficiently close to $\mbp_0$.
Thus, we obtain a real vector bundle $\MH$ over $U_{\mbp_0}/SO(3)$ defined by $\MH|_{[\mbp]}:=H_2(\mbp)$. 
By construction, $\MH$ is a subbundle of the product bundle $\ME:=U_{\mbp_0}/SO(3)\times \mathbb{C}^{2n}$ and we denote by $\MQ$ the quotient bundle:
\[
\MQ:=\ME/\MH.
\]
Observe that the product connection on $\ME$ induces a connection $\nabla$ on  $\MQ$.

%Consider the trivial bundle
%\[
%\ME:=U_{\mbp_0}/SO(3)\times \mathbb{C}^{2n},
%\]
%and let $\MH\subset \ME$ be the natural subbundle defined by
%\[
%\MH|_{[\mbp]}:=H_2(\mbp).
%\]
%We then define the quotient bundle
%
%Since $\ME$ is a trivial bundle, it carries a natural flat connection,
%and we equip $\MQ$ with the induced quotient connection, which we also
%denote by $\nabla$.

Next we define a section
\[
\Theta:U_{\mbp_0}/SO(3)\to \MQ\qquad\text{by setting}\qquad
\Theta([\mbp]):=[\hat{\Theta}(\mbp)].
\]
In particular, we have $\Theta([\mbp_0])=0$. Let
\begin{equation}
	\label{Eq_DerivOfTheta}
	\nabla\Theta_{[\mbp_0]}:\; T_{[\mbp_0]}(U_{\mbp_0}/SO(3))
	\longrightarrow \MQ_{[\mbp_0]} \cong \mathbb{C}^{2n}/H_2(\mbp_0)
\end{equation}
denote the covariant derivative of $\Theta$ at $\mbp_0$. 
%In what follows, we compute the covariant derivative $\nabla_v\Theta$ at $[\mbp_0]$ and prove that $\Theta$ is transverse to the zero section. 
%Since $\Theta([\mbp_0])=0$, the value of $\nabla_v\Theta$ at $[\mbp_0]$ does not depend on the choice of connection.

Let $\mbp_t=(p_{1,t},\ldots,p_{2n,t})$ be a unique geodesic connecting  $\mbp_0$ and $\mbp$. 
Denote by $\mbv$ its tangent vector, i.e.,
\[
\mbv := \frac{d}{dt}\Big|_{t=0} \mbp_t\in T_{\mbp_0}(S^2)^{2n}.
\]

\begin{proposition}
  \label{Prop_DerivTheta}
	The derivative of  $\Theta$ at $\mbp_0$ in the direction of $\mbv$ is 
	\begin{equation}
		\label{Eq_DerivOfThetaExpression}
	\nabla_\mbv\Theta_{[\mbp_0]} = \frac{3}{2}\, \mathbf{b}\cdot \mbv  \pmod{H_2 (\mbp_0)}.
	\end{equation}
\end{proposition}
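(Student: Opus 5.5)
The plan is to differentiate the family $\mbp\mapsto f_{\mbp}$ along the geodesic $\mbp_t$ after transplanting everything to the fixed configuration $\mbp_0$ via $\varphi_{\mbp_t}$. The factor $\tfrac32\, b_i v(p_i)$ should then come from differentiating the critical term $\Re(b_i z^{3/2})$ of $f_0$ in the direction of the branch-point motion. All other contributions should either have vanishing trace or have trace in $H_2(\mbp_0)$.

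\emph{Step 1: normalize $\varphi_t$ and reduce to a derivative in $\mbH_{\mbp_0}$.} Since $\nabla\Theta_{[\mbp_0]}$ does not depend on the choice of the diffeomorphisms $\varphi_{\mbp}$ (because $\Theta([\mbp_0])=0$), I will choose $\varphi_t:=\varphi_{\mbp_t}$ as follows. Near each $p_{i,0}$, in the stereographic coordinate $z$, it is the translation $z\mapsto z+t\,v(p_i)+O(t^2)$. It is generated by a vector field $X$ that equals $\Re\big(v(p_i)\partial_z\big)$-type translation near $p_{i,0}$ and is cut off away from $\mbp_0$. With this choice the canonical coordinate $\tilde w$ at $p_{i,t}$ pulls back to $w$ up to $O(t)$ smooth factors. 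Hence the trace of $g_t:=\varphi_t^*f_t\in\mbH_{\mbp_0}$ agrees with $\hat\Theta(\mbp_t)$ up to terms that vanish at $t=0$ to first order, because $\Tr(f_0)=0$. We write
\[
g_t=f_0+t\,\dot g+o(t).
\]
Then $\frac{d}{dt}\big|_{0}\hat\Theta(\mbp_t)=\Tr(\dot g)$. Differentiating $(\Delta_t-\lambda(t))g_t=0$, where $\Delta_t:=\varphi_t^*\Delta\varphi_{t*}$, gives
\[
(\Delta-\lambda_0)\dot g=-\dot\Delta f_0+\dot\lambda f_0 .
\]
Here $\dot\lambda=0$ by Proposition~\ref{prop_deformation}, since $B_{\mbv}(f_0,\cdot)=0$ as $\Tr(f_0)=0$.

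\emph{Step 2: identify the singular part of $\dot g$.} We have $\dot\Delta=[\Delta,X]$ up to a conformal-factor term that is smooth and harmless near $\mbp_0$. So $h:=\dot g+Xf_0$ (with the sign fixed by the convention $\varphi_t^*$) satisfies
\[
(\Delta-\lambda_0)h=(\text{error}),
\]
and I expect this error to be smooth and to have no $z^{-1/2}$-type singularity at $\mbp_0$. Near $p_{i,0}$ we have $f_0=\Re(b_iz^{3/2})+O(r^{5/2})$. Therefore
\[
Xf_0=\Re\big(\tfrac32\, b_i v(p_i) z^{1/2}\big)+O(r^{3/2}),
\]
and this lies in $\mbH_{\mbp_0}$ since $|d z^{1/2}|^2\sim r^{-1}$ is integrable. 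This gives $\Tr(Xf_0)=\tfrac32\,\mathbf b\cdot\mbv$, where $(\mathbf b\cdot\mbv)_i=b_i v(p_i)$. It remains to show that $\Tr(h)\in H_2(\mbp_0)$, up to the overall sign, which the careful computation must confirm matches \eqref{Eq_DerivOfThetaExpression}.

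\emph{Step 3: the trace of the remainder, which is the main obstacle.} Decompose $\dot g=c f_0+\sum_j c_jL_jf_0+w$ with $w\perp V(\mbp_0)$. Since $g_t\in L_{\mbp_t}$ pulled back and $\|g_t\|=1$, differentiating the constraints $\chi_j=0$ fixes the $c_j$. Moreover $\Tr(f_0)=0$, and the traces of the $L_jf_0$ span $H_2(\mbp_0)$ by \eqref{Eq_SubspH2}; so these pieces do not affect the class modulo $H_2(\mbp_0)$. Thus I must show $\Tr(w)=-\Tr(Xf_0)$, equivalently that $w+\Pi^\perp Xf_0$ has zero trace, where $\Pi^\perp$ denotes projection onto $V(\mbp_0)^\perp$. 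The approach is to note that $u:=w+\Pi^\perp X f_0$ lies in $V^\perp$ and solves $(\Delta-\lambda_0)u=\Pi^\perp(\text{error})$ with a source that is smooth near $\mbp_0$. The delicate point is that a weak $\mbH_{\mbp_0}$-solution of such an equation may still carry a $z^{1/2}$ term. I would handle this by working in the double cover $\Sigma_{\mbp_0}$ and using the Friedrichs-domain characterization: the trace-free sections are exactly those in the domain of the closure on the $r^{3/2}$-weighted space. I would then show that $\dot g$, obtained as the derivative of a smooth family in the Friedrichs domains of the $\Delta_t$, differs from $-Xf_0$ by an element of the Friedrichs domain of $\Delta$. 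Equivalently, I would verify via a Green's-identity pairing of $u$ against the model sections $\Re(z^{-1/2})$-type Green's functions that the boundary terms vanish. The finite-dimensional $V(\mbp_0)$-components $\Pi Xf_0$ then contribute traces only in $\operatorname{span}\{\Tr(L_jf_0)\}=H_2(\mbp_0)$. This yields \eqref{Eq_DerivOfThetaExpression}.
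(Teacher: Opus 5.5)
Your Steps 1 and 2 follow the paper's route: pull back by $\varphi_t$, differentiate the eigenvalue equation, use $\dot\lambda=0$, and compute $\Tr(Xf_0)=\tfrac32\,\mathbf b\cdot\mbv$. Step 3, however, has a genuine gap. The fact that closes the argument is missing, and the mechanism you substitute does not work. The point is that there is no error term. $\Delta_t=\varphi_t^*\circ\Delta\circ(\varphi_t^*)^{-1}$ is an exact conjugation, and $\frac{d}{dt}\varphi_t^*u=Xu$ at $t=0$. Hence exactly $\dot\Delta=X\Delta-\Delta X$, with no additional conformal-factor term; this is the Donaldson formula the paper uses. Together with $\dot\lambda=0$ this gives $(\Delta-\lambda_0)(\dot g-Xf_0)=0$ on $S^2\setminus\mbp_0$. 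Both $\dot g$ and $Xf_0$ lie in $\mbH_{\mbp_0}$, so $h:=\dot g-Xf_0$ is by definition a $\lambda_0$-eigensection. Minimality then gives $h\in V_{\lambda_0}(\mbp_0)=\Span\{f_0,L_1f_0,L_2f_0,L_3f_0\}$, hence $\Tr(h)\in H_2(\mbp_0)$. That is the whole remainder argument. In your notation, $\Pi^\perp h$ is an $\mbH_{\mbp_0}$-solution of the homogeneous equation orthogonal to $V(\mbp_0)$, hence zero.

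This also settles the sign you left open. Your $\dot\Delta=[\Delta,X]$ has the wrong sign: it is $\dot g-Xf_0$, not $\dot g+Xf_0$, that is an eigensection, and this yields $+\tfrac32\,\mathbf b\cdot\mbv$. The two signs give different classes modulo $H_2(\mbp_0)$ whenever $\mbv\notin H_1$. Moreover, with the correct sign your Step 3 target $\Tr(w+\Pi^\perp Xf_0)=0$ is false in general, because then $w=\Pi^\perp Xf_0$.

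The Friedrichs-domain plan would not close Step 3 even with the sign fixed. Membership in the Friedrichs domain does not make the $z^{1/2}$-coefficient vanish. That domain keeps the $r^{1/2}$ modes and excludes only $r^{-1/2}$; every eigensection lies in it, yet $\Tr(L_jf_0)\neq0$. So knowing that $\dot g$ and $\mp Xf_0$ differ by an element of it, which is automatic, says nothing about traces.

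Trace-freeness corresponds to the minimal domain, and $h$ is generally not in it. Differentiate the defining constraint $\langle L_jf_0,g_t\rangle=0$ of $L_{\mbp_t}$ and use $\langle L_jf_0,f_0\rangle=0$. This gives $\langle L_jf_0,h\rangle=-\langle L_jf_0,Xf_0\rangle$, which is nonzero for a general cut-off field $X$. So $h$ has a nonzero component along $\Span\{L_1f_0,L_2f_0,L_3f_0\}$, and therefore $\Tr(h)\neq0$.

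Likewise, suppose there really were a nonzero smooth source. Then the corresponding solution in $V(\mbp_0)^\perp$ would in general carry a nonzero $z^{1/2}$-term. No Green's-identity pairing could then give $\Tr(u)=0$ from smoothness of the source alone. The statement holds only modulo $H_2(\mbp_0)$, and the reason is the exact vanishing of the source, which puts $h$ in the four-dimensional eigenspace.
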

\begin{proof}
We compute the derivative of $\hat \Theta$ in the direction of $\mbv$ first. Rewriting $\vp_{t}:=\vp_{\mbp_t}$ for convenience, we obtain a family of diffeomorphisms satisfying $\varphi_t(\mbp_0)=\mbp_t$. We thus wish to compute 
\[
\frac{d}{dt}\Big|_{t=0}\Tr(f_t), \qquad\text{where }\quad f_t := \varphi_t^*(f_{\mbp_t})\in\Gamma(\MI_{\mbp_0}).
\]
  
  Recall that $f_{\mbp_t}$ is a section of rank $4$ vector bundle $V\to U_{\mbp_0}$ spanned by eigensections and therefore can be written as $f_{\mbp_t} = \sum_{j=1}^4 f_{\mbp_t}^j$, where each component $f_{\mbp_t}^j$ satisfies 
  \begin{equation}
  	 \label{Eq_Aux_FamEigensections}
  	\Delta_{g_0} f_{\mbp_t}^j=\lambda_t^j f_{\mbp_t}^j.
  \end{equation}
 Without loss of generality, we can assume that $f_0^1=f_{\mbp_0} = f_0$ is the initial critical $\Z/2$ eigenfunction and $f_0^j=0$ for $j=2,3,4$.

Pulling~\eqref{Eq_Aux_FamEigensections} back by $\varphi_t$, we obtain 
\begin{equation}
	\label{Eq_FamEigensections}
\Delta_{\varphi_t^* g_0}(\varphi_t^* f_{\mbp_t}^j)=\lambda_t^j (\varphi_t^* f_{\mbp_t}^j)
\qquad\Longleftrightarrow\qquad
\Delta_{g_t} f_t^j = \lambda_t f_t^j,
\end{equation}
where we denote $g_t := \varphi_t^* g_0$ and $f_t^j:=\varphi_t^* f_{\mbp_t}^j$.
By \cite[Section~5.1]{donaldson2019deformations}, the derivative of the family $\Delta_t$ can be written in the following form: 
\[
\frac{d}{dt}\Big|_{t=0} \Delta_{g_t} h
= \nabla_{\mbv}(\Delta_{g_0} h) - \Delta_{g_0}(\nabla_{\mbv} h)
\qquad\text{for any section } h.
\]
Since $\lambda_t$ is differentiable at $t=0$ by Proposition~\ref{prop_deformation},  differentiating~\eqref{Eq_FamEigensections} at $t=0$ yields
\[
\Delta_{g_0}(\dot{f}_0^j - \nabla_{\mbv} f_0^j)
=
\lambda_0(\dot{f}_0^j - \nabla_{\mbv} f_0^j) + \dot{\lambda}_0^j\, f_0^j.
\]
Notice that the value of $\lambda_t^j$ at $t=0$ does not depend on $j$. 

Furthermore, near each singular point we have the expansion $f_0\sim |z|^{3/2}$, and hence
\[
\dot{f}_0-\nabla_{\mbv} f_0 \sim |z|^{1/2}.
\]
Integrating by parts, we obtain
\[
\int_{S^2} \langle (\Delta-\lambda_0)(\dot{f}_0-\nabla_{\mbv} f_0),\, f_0\rangle = \int_{S^2} \langle (\dot{f}_0-\nabla_{\mbv} f_0),\, (\Delta-\lambda_0) f_0\rangle =0.
\]
Since the left hand side of the above equality equals $\dot{\lambda}_0^1\| f_0\|^2_{L^2}$ and $\|f_0\|_{L^2}=1$, it follows that $\dot{\lambda}_0^1=0$. Therefore, we obtain that the equation
\[
\Delta(\dot{f}_0^j-\nabla_{\mbv} f_0^j)=\lambda_0(\dot{f}_0^j-\nabla_{\mbv} f_0^j)
\]
holds for any $j=1,2,3,4$, which yields $\dot{f_0}-\nabla_{\mbv} f_0\in V_{\lambda_0}$.  
Since $\Tr(h)\in H_2$ for all $h\in V_{\lambda_0}$, we conclude
\[
\Tr(\dot{f_0}-\nabla_{\mbv} f_0)\in H_2.
\]

Furthermore, if $z_j$ is the local coordinate centred at $p_{j,0}$, then the expansion of $f_0$ near $p_{j,0}$ takes the form
\[
f_0 \sim \Re\!\left(b_j\, z_j^{3/2}\right)+\MO\big (|z_j|^{5/2}\big ),
\]
so that we obtain a vector $\mathbf{b}=(b_1,\ldots,b_{2n})$ with all components being non-zero. 

For a tangent vector $\mathbf{v}=(v_1,\ldots,v_{2n})$, in a neighbourhood of $p_{j,0}$ we have
\[
\nabla_{\mathbf{v}} f_0
= \Re\!\left(\tfrac{3}{2}\, b_j\cdot v_j\, z_j^{1/2}\right)
+ \MO\big (|z_j|^{3/2}\big ).
\]
Therefore,
\[
\Tr(\nabla_{\mathbf{v}} f_0)=\tfrac{3}{2}\,\mathbf{b}\cdot\mathbf{v} := \tfrac 32 \big ( b_1\cdot v_1,\ldots, b_{2n}\cdot v_{2n} \big). 
\]
Consequently,
\[
\frac{d}{dt}\Big|_{t=0}\Tr(f_t)
= \Tr (\dot f_0)\equiv \Tr\big (  \nabla_\mbv f_0 \big )
=\tfrac{3}{2}\,\mathbf{b}\cdot \mbv
\quad\pmod{H_2}.
\]
We conclude that the covariant derivative of $\Theta$ is given by~\eqref{Eq_DerivOfThetaExpression} as claimed.
\end{proof}

\begin{corollary}\label{prop_isomorphism}
	$\Theta$ is transverse to the zero section.
\end{corollary}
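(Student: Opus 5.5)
The plan is to show that $\nabla\Theta_{[\mbp_0]}$ in \eqref{Eq_DerivOfTheta} is a linear isomorphism. Since $\Theta([\mbp_0])=0$, this gives transversality to the zero section at $[\mbp_0]$. By Proposition~\ref{Prop_DerivTheta}, the map is induced by
\[
\mbv\longmapsto \tfrac32\,\mathbf b\cdot\mbv \pmod{H_2(\mbp_0)},
\]
defined on $T_{\mbp_0}(S^2)^{2n}/H_1$. First we check that this formula descends to the quotient by $H_1$. Second we compare dimensions. Third we prove injectivity.

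\emph{Dimension count.} The source has real dimension $4n-3$, because the $\SO(3)$-action on $U_{\mbp_0}$ is free and so $\dim H_1=3$. The target $\mathbb C^{2n}/H_2(\mbp_0)$ also has real dimension $4n-3$, because $\dim H_2(\mbp_0)=3$. Hence injectivity suffices.

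\emph{Key identity: $\tfrac32\,\mathbf b\cdot H_1=H_2(\mbp_0)$.} Let $X_j$ denote the vector field $L_j$ on $S^2$, and let $\mathbf w_j:=(X_j(p_{1,0}),\ldots,X_j(p_{2n,0}))\in H_1$ be the corresponding infinitesimal rotation of $\mbp_0$. The operator $L_j$ on $\Gamma(\MI_{\mbp_0})$ is differentiation along $X_j$ using the flat structure.

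We expand near $p_{i,0}$ in the coordinate $z_i$. There $f_0\sim\Re(b_i z_i^{3/2})+\MO(|z_i|^{5/2})$, and $X_j$ has the form $X_j(p_{i,0})+\MO(|z_i|)$. Differentiating term by term gives
\[
L_j f_0=\Re\big(\tfrac32\, b_i\, X_j(p_{i,0})\, z_i^{1/2}\big)+\MO(|z_i|^{3/2}),
\]
which is the same computation as for $\nabla_\mbv f_0$ in the proof of Proposition~\ref{Prop_DerivTheta}. Here $X_j(p_{i,0})$ is viewed as a complex number via $z_i$. Consequently
\[
\Tr(L_j f_0)=\tfrac32\,\mathbf b\cdot \mathbf w_j,
\]
possibly up to a sign depending on whether $L_j$ is $+$ or $-$ the Lie derivative, which is irrelevant for spans. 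By \eqref{Eq_SubspH2} this gives $H_2(\mbp_0)=\tfrac32\,\mathbf b\cdot H_1$. In particular the formula \eqref{Eq_DerivOfThetaExpression} vanishes on $H_1$ modulo $H_2(\mbp_0)$, so it descends to the quotient. This is consistent with the $\SO(3)$-invariance of $\hat\Theta$.

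\emph{Injectivity.} Suppose $\tfrac32\,\mathbf b\cdot\mbv\in H_2(\mbp_0)$. Then $\mathbf b\cdot\mbv=\mathbf b\cdot\mathbf w$ for some $\mathbf w\in H_1$, that is, $b_i v_i=b_i w_i$ in $\mathbb C$ for every $i$. Non-degeneracy gives $b_i\neq0$ for all $i$, so $v_i=w_i$ for every $i$. Hence $\mbv=\mathbf w\in H_1$, and $[\mbv]=0$ in the quotient. Therefore $\nabla\Theta_{[\mbp_0]}$ is injective, hence an isomorphism, and $\Theta$ is transverse to the zero section.

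The main point requiring care is the identity $\Tr(L_jf_0)=\tfrac32\,\mathbf b\cdot\mathbf w_j$. One must check that the $\MO(|z_i|)$ variation of $X_j$ and the use of the flat structure do not contribute to the $z_i^{1/2}$ coefficient. They do not: such terms only produce contributions of order $|z_i|^{3/2}$.
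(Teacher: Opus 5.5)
Your proposal is correct and follows essentially the same route as the paper. Componentwise multiplication by $\tfrac32\mathbf b$ is an isomorphism $T_{\mbp_0}(S^2)^{2n}\to\mathbb C^{2n}$ by non-degeneracy, and it maps $H_1$ onto $H_2(\mbp_0)$; hence it induces an isomorphism of quotients, which is $\nabla\Theta_{[\mbp_0]}$ by Proposition~\ref{Prop_DerivTheta}. The only differences are that you actually verify $\tfrac32\mathbf b\cdot H_1=H_2(\mbp_0)$ via the local expansion of $L_jf_0$, which the paper only attributes to the ``geometric origin'' of $H_1,H_2$, and that your separate dimension count is redundant, though harmless, once that identity and the non-degeneracy are in hand.
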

\begin{proof}
  To prove that~\eqref{Eq_DerivOfTheta}  is an isomorphism, first observe that the linear map
  \[
  \Gamma: T_{\mbp_0}(S^2)^{2n} \longrightarrow \mathbb{C}^{2n},
  \qquad \Gamma(\mbv) := \frac{3}{2}\, \mathbf{b}\cdot \mbv
  \]
  is an isomorphism because $f_0$ is non-degenerate. Furthermore, the geometric origin
  of $H_1$ and $H_2$ implies that the variation of the trace along a
  rotational vector field corresponds exactly to the trace of the
  rotationally generated eigensection; explicitly, we have
  $\Gamma(H_1) = H_2$. 
  Therefore, $\Gamma$ descends to a well-defined isomorphism of the quotient
  spaces. 
  It remains to notice that this induced map between quotient spaces coincides with $\nabla\Theta_{[\mbp_0]}$ by Proposition~\ref{Prop_DerivTheta}.
\end{proof}

%\[
%V_{\mbp} = L_{\mbp} \oplus L_{\mbp}^{\perp},
%\qquad\text{and define}\qquad
%H_1(\mbp):=\Tr(L_{\mbp}),\quad H_2(\mbp):=\Tr(L_{\mbp}^{\perp}).
%\]
%Note that, since $L_{\mbp_0}^\perp$ is spanned by $L_1f_0, L_2f_0,
%L_3f_0$, we have
%\[
%H_1(\mbp_0)=\Span\{\Tr(f_0)\}=\{0\},\qquad\text{and}\qquad
%H_2(\mbp_0)=\Span\{\Tr(L_1 f_0),\,\Tr(L_2 f_0),\,\Tr(L_3 f_0)\}.
%\]
%Moreover, for $\mbp$ sufficiently close to $\mbp_0$, Proposition~\ref{prop_deformation} implies that $\dim H_2(\mbp)=3$.

We can now prove our main theorem.

%\begin{theorem}
%	A minimal non-degenerate critical $\Z/2$ eigenfunction $f_0$ is deformation rigid.
%\end{theorem}

\begin{proof}[Proof of Theorem~\ref{Thm_DeformRigid}]
  \label{Pf_MainThm}
	By Corollary~\ref{prop_isomorphism}, shrinking $U_{\mbp_0}$ if
	necessary, we may assume that the image of $\Theta([\mbp])$ in
	$\mathbb{C}^{2n}/H_2(\mbp)$ is non-zero for all
	$\mbp\in U_{\mbp_0}\setminus\{\mbp_0\}$.

	Furthermore, recall that we have the splitting $V ({\mbp}) = L_{\mbp} \oplus L_{\mbp}^{\perp}$ satisfying the following properties:
	\[
	\Ker \,\Tr|_{V(\mbp_0)} = L_{\mbp_0}=\mathbb R\, f_0\qquad\text{and}\qquad
	\Tr|_{L_{\mbp_0}^\perp}\colon  L_{\mbp_0}^\perp\to H_2(\mbp_0)\ \text{is an isomorphism}.
	\]
	Therefore, for any
	$f=f_1+f_2\in L_{\mbp}\oplus L_{\mbp}^{\perp}=V({\mbp})$, with
	$f_1\in L_{\mbp}$ and $f_2\in L_{\mbp}^{\perp}$, we  have
	\[
	\Tr(f_1)\neq 0 \mod{H_2(\mbp)}.
	\]
	Since $\Tr(f_2)\in H_2(\mbp)$, we conclude that $\Tr(f)\neq 0$.
	Thus no non-zero eigensection in $V({\mbp})$ has vanishing trace, and
	therefore $f_0$ is deformation rigid.
\end{proof}

\bibliographystyle{plain}
\bibliography{references}
\end{document}

%% file: convention.tex
\newcommand{\MH}{\mathcal{H}}

\newcommand{\MO}{\mathcal{O}}
\newcommand{\MQ}{\mathcal{Q}}

\newcommand{\cI}{{\mathcal I}}

\newcommand{\Ker}{\mathrm{Ker}}

\newcommand{\Z}{{\mathbb Z}}
\newcommand{\R}{{\mathbb R}}
\newcommand{\C}{{\mathbb C}}

\newcommand{\La}{\lambda}

\newcommand{\MC}{\mathcal{C}}
\newcommand{\Tr}{\mathrm{Tr}}

\newcommand{\vp}{\varphi}

\newcommand{\ME}{\mathcal{E}}

\newcommand{\lam}{\lambda}

\newcommand{\MI}{\mathcal{I}}
\newcommand{\ZT}{\mathbb{Z}/2}

\newcommand{\Id}{\mathrm{Id}}

\newcommand{\dist}{\mathrm{dist}}

\newcommand{\SO}{\mathrm{SO}}

\newcommand{\mbR}{\mathbb{R}}

\newcommand{\mbH}{\mathbb{H}}

\newcommand{\mul}{{\mathrm{mul}\,}}

\newcommand{\mbp}{\mathbf{p}}
\newcommand{\mbq}{\mathbf{q}}

\newcommand{\Span}{\mathrm{span}}
\newcommand{\mbv}{\mathbf{v}}

%% file: references.bib
@incollection {Donaldson17_AdiabaticLimits,
	AUTHOR = {Donaldson, Simon},
	TITLE = {Adiabatic limits of co-associative {K}ovalev-{L}efschetz
	fibrations},
	BOOKTITLE = {Algebra, geometry, and physics in the 21st century},
	SERIES = {Progr. Math.},
	VOLUME = {324},
	PAGES = {1--29},
	PUBLISHER = {Birkh\"auser/Springer, Cham},
	YEAR = {2017},
	ISBN = {978-3-319-59938-0; 978-3-319-59939-7},
	MRCLASS = {53C26 (53C38)},
	MRNUMBER = {3702382},
	MRREVIEWER = {Antonella\ Nannicini},
	DOI = {10.1007/978-3-319-59939-7\_1},
	URL = {https://doi.org/10.1007/978-3-319-59939-7_1},
}

@article {HE2023SpecialLagrangian,
    AUTHOR = {He, Siqi},
     TITLE = {The branched deformations of the special {L}agrangian
              submanifolds},
   JOURNAL = {Geom. Funct. Anal.},
  FJOURNAL = {Geometric and Functional Analysis},
    VOLUME = {33},
      YEAR = {2023},
    NUMBER = {5},
     PAGES = {1266--1321},
      ISSN = {1016-443X,1420-8970},
  MRNUMBER = {4646409},
       DOI = {10.1007/s00039-023-00645-8},
       URL = {https://doi.org/10.1007/s00039-023-00645-8},
}

@article {HMT2023IndexGraphs,
    AUTHOR = {Haydys, Andriy and Mazzeo, Rafe and Takahashi, Ryosuke},
     TITLE = {An index theorem for {$\mathbb Z/2$}-harmonic spinors
              branching along a graph},
   JOURNAL = {arXiv preprint arXiv:2310.15295},
      YEAR = {2023},
    EPRINT = {2310.15295},
ARCHIVEPREFIX = {arXiv},
PRIMARYCLASS = {math.DG},
       DOI = {10.48550/arXiv.2310.15295},
       URL = {https://arxiv.org/abs/2310.15295},
      NOTE = {Version 2, 2025},
}

@article {Parker2025DeformationsZ2Spinors,
    AUTHOR = {Parker, Gregory J.},
     TITLE = {Deformations of {$\mathbb Z_2$}-harmonic spinors on
              3-manifolds},
   JOURNAL = {Geom. Funct. Anal.},
  FJOURNAL = {Geometric and Functional Analysis},
      YEAR = {2026},
      ISSN = {1016-443X,1420-8970},
       DOI = {10.1007/s00039-026-00729-1},
       URL = {https://doi.org/10.1007/s00039-026-00729-1},
    EPRINT = {2301.06245},
ARCHIVEPREFIX = {arXiv},
PRIMARYCLASS = {math.DG},
}

@article {HeParker2024Z2Harmonic,
    AUTHOR = {He, Siqi and Parker, Gregory J.},
     TITLE = {{$\mathbb Z_2$}-harmonic spinors and 1-forms on connected
              sums and torus sums of 3-manifolds},
   JOURNAL = {arXiv preprint arXiv:2407.10922},
      YEAR = {2024},
    EPRINT = {2407.10922},
ARCHIVEPREFIX = {arXiv},
PRIMARYCLASS = {math.DG},
       DOI = {10.48550/arXiv.2407.10922},
       URL = {https://arxiv.org/abs/2407.10922},
}

@article {HaydysSalm2025Z2Sphere,
    AUTHOR = {Haydys, Andriy and Salm, Willem Adriaan},
     TITLE = {Search for {$\mathbb Z/2$} eigenfunctions on the sphere
              using machine learning},
   JOURNAL = {arXiv preprint arXiv:2507.13122},
      YEAR = {2025},
    EPRINT = {2507.13122},
ARCHIVEPREFIX = {arXiv},
PRIMARYCLASS = {math.DG},
       DOI = {10.48550/arXiv.2507.13122},
       URL = {https://arxiv.org/abs/2507.13122},
}

@article {FranceschiniMazzeo2026MinimalSurfaces,
    AUTHOR = {Franceschini, Federico and Mazzeo, Rafe and Minter, Paul},
     TITLE = {Minimal surfaces with stratified branching sets},
   JOURNAL = {arXiv preprint arXiv:2603.27168},
      YEAR = {2026},
    EPRINT = {2603.27168},
ARCHIVEPREFIX = {arXiv},
PRIMARYCLASS = {math.DG},
       DOI = {10.48550/arXiv.2603.27168},
       URL = {https://arxiv.org/abs/2603.27168},
}

@article {chen2024existence,
    AUTHOR = {Chen, Jiahuang and He, Siqi},
     TITLE = {On the existence and rigidity of critical {$\mathbb Z/2$}
              eigenvalues},
   JOURNAL = {arXiv preprint arXiv:2404.05387},
      YEAR = {2024},
    EPRINT = {2404.05387},
ARCHIVEPREFIX = {arXiv},
PRIMARYCLASS = {math.DG},
       DOI = {10.48550/arXiv.2404.05387},
       URL = {https://arxiv.org/abs/2404.05387},
}

@article {taubes2021topological,
    AUTHOR = {Taubes, Clifford H. and Wu, Yingying},
     TITLE = {Topological aspects of {$\mathbb Z/2\mathbb Z$}
              eigenfunctions for the {L}aplacian on {$S^2$}},
   JOURNAL = {J. Differential Geom.},
  FJOURNAL = {Journal of Differential Geometry},
    VOLUME = {128},
      YEAR = {2024},
     PAGES = {379--462},
      ISSN = {0022-040X,1945-743X},
       URL = {https://arxiv.org/abs/2108.05017},
    EPRINT = {2108.05017},
ARCHIVEPREFIX = {arXiv},
PRIMARYCLASS = {math.DG},
}

@incollection {taubes2020examples,
    AUTHOR = {Taubes, Clifford H. and Wu, Yingying},
     TITLE = {Examples of singularity models for {$\mathbb Z/2$} harmonic
              1-forms and spinors in dimension three},
 BOOKTITLE = {Proceedings of the {G}{\"o}kova Geometry-Topology Conferences
              2018/2019},
     PAGES = {37--66},
 PUBLISHER = {International Press, Somerville, MA},
      YEAR = {2020},
  MRNUMBER = {4251085},
       URL = {https://gokovagt.org/proceedings/2018-2019/03-TaubWu.html},
    EPRINT = {2001.00227},
ARCHIVEPREFIX = {arXiv},
PRIMARYCLASS = {math.DG},
}

@article {taubes2013psl2c,
    AUTHOR = {Taubes, Clifford H.},
     TITLE = {{${\rm PSL}(2;\mathbb C)$} connections on 3-manifolds with
              {${\rm L}^2$} bounds on curvature},
   JOURNAL = {Camb. J. Math.},
  FJOURNAL = {Cambridge Journal of Mathematics},
    VOLUME = {1},
      YEAR = {2013},
    NUMBER = {2},
     PAGES = {239--397},
      ISSN = {2168-0930,2168-0949},
   MRCLASS = {53B15 (53C05 53C21)},
  MRNUMBER = {3272050},
       DOI = {10.4310/CJM.2013.v1.n2.a2},
       URL = {https://doi.org/10.4310/CJM.2013.v1.n2.a2},
      NOTE = {Corrigendum: Camb. J. Math. 3 (2015), no. 4, 619--631},
}

@article {taubes2014zero,
    AUTHOR = {Taubes, Clifford H.},
     TITLE = {The zero loci of {$\mathbb Z/2$} harmonic spinors in dimension
              2, 3 and 4},
   JOURNAL = {arXiv preprint arXiv:1407.6206},
      YEAR = {2014},
    EPRINT = {1407.6206},
ARCHIVEPREFIX = {arXiv},
PRIMARYCLASS = {math.DG},
       DOI = {10.48550/arXiv.1407.6206},
       URL = {https://arxiv.org/abs/1407.6206},
}

@article {donaldson2019deformations,
    AUTHOR = {Donaldson, Simon},
     TITLE = {Deformations of multivalued harmonic functions},
   JOURNAL = {Q. J. Math.},
  FJOURNAL = {The Quarterly Journal of Mathematics},
    VOLUME = {72},
      YEAR = {2021},
    NUMBER = {1-2},
     PAGES = {199--235},
      ISSN = {0033-5606,1464-3847},
  MRNUMBER = {4271385},
       DOI = {10.1093/qmath/haab018},
       URL = {https://doi.org/10.1093/qmath/haab018},
}

@article {walpuskizhang2019compactness,
    AUTHOR = {Walpuski, Thomas and Zhang, Boyu},
     TITLE = {On the compactness problem for a family of generalized
              {S}eiberg--{W}itten equations in dimension three},
   JOURNAL = {Duke Math. J.},
  FJOURNAL = {Duke Mathematical Journal},
    VOLUME = {170},
      YEAR = {2021},
    NUMBER = {17},
     PAGES = {3891--3934},
      ISSN = {0012-7094,1547-7398},
       DOI = {10.1215/00127094-2021-0005},
       URL = {https://doi.org/10.1215/00127094-2021-0005},
    EPRINT = {1904.03749},
ARCHIVEPREFIX = {arXiv},
PRIMARYCLASS = {math.DG},
}

@article {haydyswalpuski2015compactness,
    AUTHOR = {Haydys, Andriy and Walpuski, Thomas},
     TITLE = {A compactness theorem for the {S}eiberg--{W}itten equation
              with multiple spinors in dimension three},
   JOURNAL = {Geom. Funct. Anal.},
  FJOURNAL = {Geometric and Functional Analysis},
    VOLUME = {25},
      YEAR = {2015},
    NUMBER = {6},
     PAGES = {1799--1821},
      ISSN = {1016-443X,1420-8970},
  MRNUMBER = {3432158},
       DOI = {10.1007/s00039-015-0346-3},
       URL = {https://doi.org/10.1007/s00039-015-0346-3},
}

@article {taubes2013compactness,
    AUTHOR = {Taubes, Clifford H.},
     TITLE = {Compactness theorems for {${\rm SL}(2;\mathbb C)$}
              generalizations of the 4-dimensional anti-self dual equations},
   JOURNAL = {arXiv preprint arXiv:1307.6447},
      YEAR = {2013},
    EPRINT = {1307.6447},
ARCHIVEPREFIX = {arXiv},
PRIMARYCLASS = {math.DG},
       DOI = {10.48550/arXiv.1307.6447},
       URL = {https://arxiv.org/abs/1307.6447},
      NOTE = {Version 5, 2020},
}
